\documentclass[12pt]{article}
\topmargin=0cm \oddsidemargin=0mm \textwidth=14.7cm
\textheight=22cm
\parindent=0cm
\parskip=5mm
\pagestyle{plain}
\usepackage{amsthm}
\usepackage{amsfonts}
\usepackage{latexsym}
\usepackage{amsmath}
\usepackage{amssymb}
\usepackage{amscd}
\usepackage{epsfig}
\usepackage{graphicx, subfigure}
\usepackage{color}
\addtocounter{MaxMatrixCols}{4}
\theoremstyle{plain}
\newtheorem{theorem}{Theorem}[section]

\newtheorem{lemma}[theorem]{Lemma}

\theoremstyle{definition}

%\newtheorem{conjecture}[theorem]{Conjecture}

%\newcommand{\qed}{\hskip 10pt $\Box$}
%\newenvironment{proof}{\par {\sc {\bf Proof.}\hskip 5pt}}{\hfill \qed \par}

% declare theorem-like environments
%\theoremstyle{plain}
%\newtheorem{theorem}{Theorem}
%\newtheorem{lemma}[theorem]{Lemma}
%\newtheorem{corollary}[theorem]{Corollary}
%\newtheorem{proposition}[theorem]{Proposition}
%\newtheorem{fact}[theorem]{Fact}
%\newtheorem{observation}[theorem]{Observation}
%\newtheorem{claim}[theorem]{Claim}
%
%\theoremstyle{definition}
%\newtheorem{definition}[theorem]{Definition}
%\newtheorem{example}[theorem]{Example}
\newtheorem{conjecture}[theorem]{Conjecture}
%\newtheorem{open}[theorem]{Open Problem}
%\newtheorem{problem}[theorem]{Problem}
%\newtheorem{question}[theorem]{Question}
%
%\theoremstyle{remark}
%\newtheorem{remark}[theorem]{Remark}
%\newtheorem{note}[theorem]{Note}

%\newauthor{%
%Michael Haythorpe}{%
%M. Haythorpe}{%
%School of Computer Science, Engineering and Mathematics\\
%Flinders University\\
%Australia}[%
%michael.haythorpe@flinders.edu.au]
%
%
%\title{Smallest Non-Hamiltonian 3--Regular Graphs with Arbitrary Girth}
%
%\keywords{Cages, Non-Hamiltonian, 3--Regular Graphs, Girth, Prisons}
%
%\classnbr{05C38, 05C45}

%\title{\bf Smallest Non-Hamiltonian\\ 3--Regular Graphs with Arbitrary Girth}
%
%% input author, affilliation, address and support information as follows;
%% the address should include the country, and does not have to include
%% the street address
%
%\author{Michael Haythorpe\\
%\small School of Computer Science, Engineering and Mathematics\\[-0.8ex]
%\small Flinders University\\[-0.8ex]
%\small Australia\\
%\small\tt michael.haythorpe@flinders.edu.au
%}
%
%% \date{\dateline{submission date}{acceptance date}\\
%% \small Mathematics Subject Classifications: comma separated list of
%% MSC codes available from http://www.ams.org/mathscinet/freeTools.html}
%
%\date{\dateline{Jul 2, 2012}{Jul 3, 2012}\\
%\small Mathematics Subject Classifications: 05C38, 05C45}

\begin{document}

\title{Non-Hamiltonian 3--Regular Graphs with Arbitrary Girth}
\author{M. Haythorpe\footnote{Flinders University. Email: michael.haythorpe@flinders.edu.au}}
%\institute{M. Haythorpe
%\at Flinders University\\
%\email{michael.haythorpe@flinders.edu.au}
%}
\maketitle

\begin{abstract}It is well known that 3--regular graphs with arbitrarily large girth exist. Three constructions are given that use the former to produce non-Hamiltonian 3--regular graphs without reducing the girth, thereby proving that such graphs with arbitrarily large girth also exist. The resulting graphs can be 1--, 2-- or 3--edge-connected depending on the construction chosen. From the constructions arise (naive) upper bounds on the size of the smallest non-Hamiltonian 3--regular graphs with particular girth. Several examples are given of the smallest such graphs for various choices of girth and connectedness.\end{abstract}

\section{Introduction}\label{sec-intro}

Consider a $k$--regular graph $\Gamma$ with girth $g$, containing $N$ vertices. Then $\Gamma$ is said to be a {\em $(k,g)$--cage} if and only if all other $k$--regular graphs with girth $g$ contain $N$ or more vertices. The study of cages, or {\em cage graphs}, goes back to Tutte \cite{tutte}, who later gave lower bounds on $n(k,g)$, the number of vertices in a $(k,g)$--cage \cite{tutte2}. The best known upper bounds on $n(k,g)$ were obtained by Sauer \cite{sauer} around the same time. Since that time, the advent of vast computational power has enabled large-scale searches such as those conducted by McKay et al \cite{mckay}, and Royle \cite{royle} who maintains a webpage with examples of known cages. Erd\H{o}s and Sachs first gave a proof that 3--regular graphs of arbitrary girth exist \cite{erdos} which was converted into an algorithm to construct such graphs by Biggs \cite{biggs} in 1998.

This manuscript focuses solely on 3--regular graphs, and the additional requirement of {\em non-Hamiltonicity} is added. A graph containing $N$ vertices is {\em non-Hamiltonian} if and only if it does not contain any simple cycles of length $N$. Such graphs are thought to be rare in general, and this has been proven to be the case for regular graphs \cite{robinson}. First, three constructions will be given that produce non-Hamiltonian 3--regular graphs with chosen girth $g$, and it will be shown that these constructions produce graphs that are 1--, 2-- or 3--edge-connected respectively. Then the concept of a $(k,g)$--cage will be extended to a {\em $(k,g,e)$--prison}, being an $e$--edge-connected, non-Hamiltonian $k$--regular graph with girth $g$ containing the minimum possible number of vertices. Naive upper bounds arising from the constructions will be given for 3--regular prisons. The manuscript will conclude with some examples of such prisons for small girths.

\section{Bridge construction}\label{sec-bridge}

Recall that, for any desired $g \geq 3$, Hamiltonian 3--regular graphs can be produced with girth $g$. Call such a graph $\Gamma_g$. Then, select an edge $e$ from $\Gamma_g$ such that there is at least one cycle of length $g$ in $\Gamma_g$ that does not traverse edge $e$. It is clear that such an edge must exist. Then, by taking two copies of $\Gamma_g$, it is possible to break edge $e$ in both graphs and insert a bridge to produce a 1--edge-connected, 3--regular graph, say $\Gamma_{g,1}^*$. In the language used in Baniasadi et al \cite{genetictheory}, this construction is equivalent to performing a type 1 breeding operation. An example, for $g = 3$, is given in Figure \ref{fig-con1}.

\begin{figure}[h!]
\centering\includegraphics[scale=0.45]{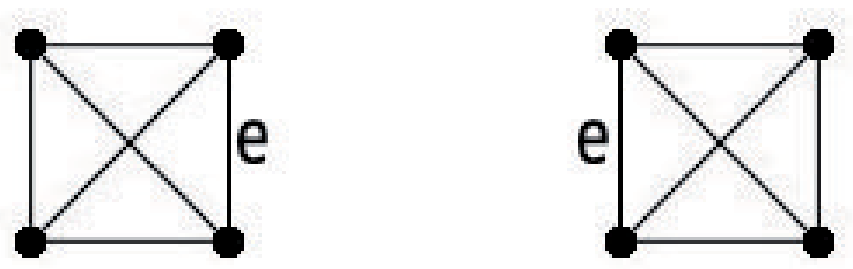} \includegraphics[scale=0.45]{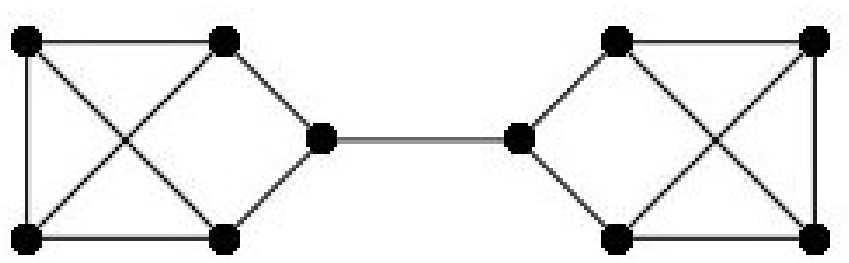}
\caption{Two copies of $\Gamma_3$, and the resulting $\Gamma_{3,1}^*$ produced by the bridge construction.}
\label{fig-con1}
\end{figure}

It is well-known that graphs containing bridges (i.e. 1--edge-connected graphs) are always non-Hamiltonian; indeed, for 3--regular graphs it is conjectured that almost all non-Hamiltonian graphs contain bridges \cite{conjecturepaper}.

\begin{lemma}The resultant graph $\Gamma_{g,1}^*$ has girth $g$.\label{lem-1c}\end{lemma}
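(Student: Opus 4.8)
The claim is that girth equals $g$, which I would split into the two usual inequalities: exhibiting a cycle of length exactly $g$ (so the girth is at most $g$), and ruling out any cycle of length less than $g$ (so the girth is at least $g$). The whole argument rests on two elementary structural facts about the construction. First, each copy of $\Gamma_g$ is altered only in the immediate vicinity of the chosen edge $e$: the edge $\{u,v\}=e$ is broken and rerouted through a new degree-two vertex $w$ (via the two edges $\{u,w\}$ and $\{w,v\}$), and this $w$ is then joined to its counterpart in the other copy to form the bridge. Second, the joining edge is, by construction, a bridge, and no cycle can contain a bridge (deleting it disconnects its two endpoints, which any cycle through it would have to reconnect without using it).

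For the upper bound I would invoke the defining property of $e$: it was selected precisely so that $\Gamma_g$ contains a cycle $C$ of length $g$ that does not traverse $e$. Since every modification is confined to $e$ and to the bridge, the cycle $C$ is left completely untouched and persists verbatim inside either copy of $\Gamma_g$ within $\Gamma_{g,1}^*$. This produces a cycle of length $g$, so the girth is at most $g$.

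For the lower bound I would first apply the bridge observation to conclude that every cycle of $\Gamma_{g,1}^*$ lies entirely within a single (modified) copy of $\Gamma_g$, because no cycle can use the bridge that is the only link between the copies. It then suffices to bound the girth of one modified copy, which I would do by cases according to whether a cycle passes through the new vertex $w$. A cycle avoiding $w$ is literally a cycle of $\Gamma_g$ that does not use $e$, hence has length at least $g$. A cycle through $w$ must enter and leave along its only two relevant edges $\{u,w\}$ and $\{w,v\}$; replacing this length-two detour by the single edge $e$ turns the remaining $v$-to-$u$ path (which lives in the unmodified part of $\Gamma_g$) together with $e$ into a genuine simple cycle of $\Gamma_g$ through $e$. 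That cycle has length at least $g$, so its $v$-to-$u$ path has length at least $g-1$, and the original cycle through $w$ therefore has length at least $g+1$. In every case the length is at least $g$.

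The main obstacle is the bookkeeping in this last case: one must verify that a cycle through $w$ really does correspond to a \emph{simple} cycle of $\Gamma_g$ through $e$ (so that the girth hypothesis applies to it), and that exchanging the detour for $e$ shortens rather than lengthens, which is exactly what forces the bound $g+1$ rather than something smaller. Pinning down that the two-edge detour through $w$ is strictly longer than the edge it replaced is the crux that prevents a short cycle from appearing, and combining this lower bound with the surviving $g$-cycle from the upper bound gives girth exactly $g$.
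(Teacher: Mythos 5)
Your proof is correct and follows essentially the same route as the paper's: exhibit the untouched $g$-cycle for the upper bound, observe that no cycle can use the bridge, and note that subdividing $e$ only lengthens (by one) any cycle that formerly used $e$. You simply carry out more carefully, via the explicit case analysis at the new vertex $w$, the correspondence the paper states in one line.
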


\begin{proof}It is clear that the girth of $\Gamma_{g,1}^*$ is at most $g$, since there are cycles of length $g$ that remain untouched by the construction. It then suffices to show that the construction does not introduce any smaller cycles. Clearly no cycles can use the bridge, and the only other alteration caused by the construction is the addition of another vertex to edge $e$. Then, any cycles in $\Gamma_g$ that pass through edge $g$ are now longer by 1 in $\Gamma_{g,1}^*$, but certainly no shorter cycles than those in $\Gamma_g$ have been introduced.\end{proof}

A naive upper bound of $2n(3,g) + 2$ can now be given for the size of the smallest non-Hamiltonian 3--regular graph with girth $g$.

Of course, it is worth noting that while a construction process of this type cannot introduce smaller cycles, if an edge $e$ is selected in $\Gamma_g$ that {\em every} cycle of length $g$ passes through, the construction will produce a graph of girth $g+1$. Certainly 3--regular graphs exist that contain an edge that lies on every shortest cycle; for example, any 3--regular graph containing exactly one triangle has three such edges. It is not clear whether any cages exist that contain an edge that lies on every shortest cycle, however. If no such cages exist then the above upper bound is tight for non-Hamiltonian 3--regular, 1--edge-connected graphs with girth $g$.

Although this manuscript is focused on 3--regular graphs, it is worth noting that the same type of construction could be used on $k$--regular graphs for any $k \geq 3$ and it is fairly easy to check that the result generalises. The same cannot be said for the two constructions in the following section.

\section{Non-bridge constructions}\label{sec-non-bridge}

Although the above construction demonstrates that non-Hamiltonian 3--regular graphs exist with arbitrary girth, the construction can only produce bridge graphs. These graphs are, perhaps, uninteresting as they can be detected in polynomial time. However, for 2--edge-connected or 3--edge-connected 3--regular graphs, determining Hamiltonicity is an NP-complete problem \cite{gareyjohnson}. Two constructions now follow that produce non-Hamiltonian 3--regular graphs with chosen girth $g$ that are 2--edge-connected or $3$--edge-connected respectively.

As in the previous section, consider $\Gamma_g$, a Hamiltonian 3--regular graph with girth $g$, and an edge $e$ from $\Gamma_g$ such that there is at least one cycle of length $g$ in $\Gamma_g$ that does not traverse edge $e$. Then, by taking three copies of $\Gamma_g$, we can construct a new 3--regular, 2--edge-connected graph by the following process. Break edge $e$ in two copies of $\Gamma_g$, and join the broken edges together to form a 2-bond. Then, insert a vertex into both edges of the 2-bond. Finally, break edge $e$ in the third copy of $\Gamma_g$ and connect the two broken parts to the two vertices. Call the resulting graph $\Gamma_{g,2}^*$. In the language used in Basiasadi et al \cite{genetictheory} this construction is equivalent to first performing a type 2 breeding operation, then performing a type 2 parthenogenic operation, and finally performing a second type 2 breeding operation on the parthenogenic bridge. An example, for $g = 3$, is given in Figure \ref{fig-con2}.

\begin{figure}[h!]
\centering\includegraphics[scale=0.45]{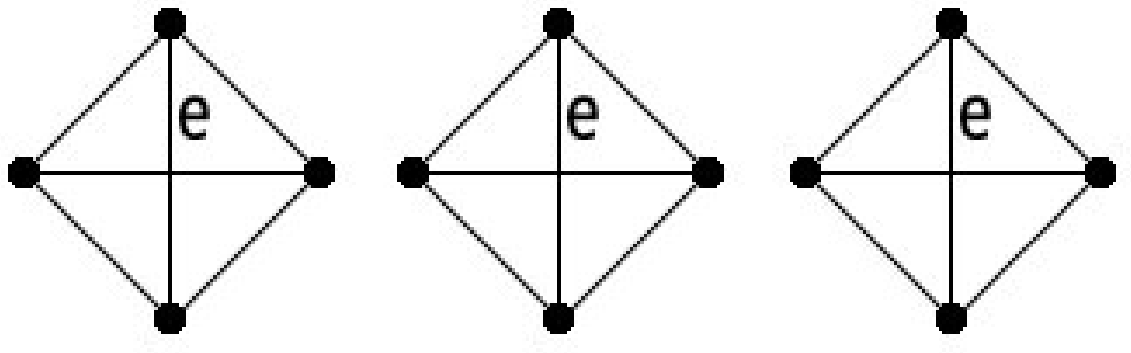} \includegraphics[scale=0.45]{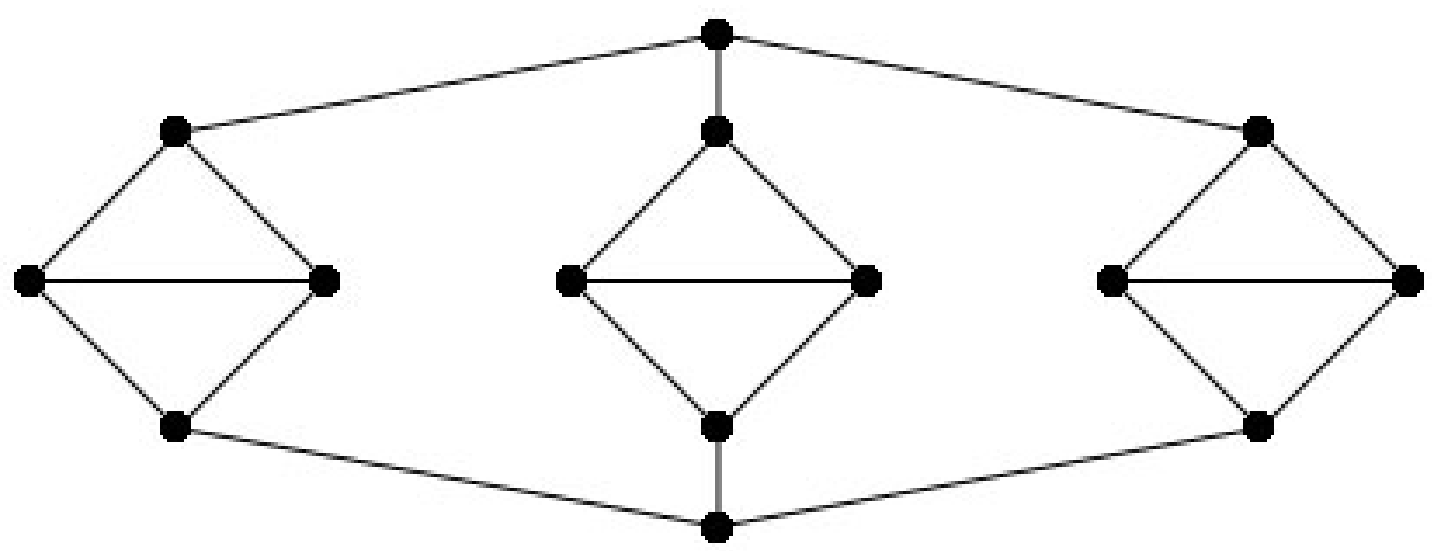}
\caption{Three copies of $\Gamma_3$, and the resulting $\Gamma_{3,2}^*$ produced by the 2--edge-connected construction.}
\label{fig-con2}
\end{figure}

It is easy to see that the resulting graph $\Gamma_{g,2}^*$ must be non-Hamiltonian. Starting from the top introduced vertex, one of the three subgraphs arising from a copy of $\Gamma_g$ must be visited first. Then once it is exited, the bottom introduced vertex is visited. A second subgraph must then be visited, but once it is exited the top introduced vertex must again be visited. Since it is not possible to visit all three subgraphs with a simple cycle, the graph is non-Hamiltonian. An alternative proof of non-Hamiltonicity for graphs of this type is given in Baniasadi and Haythorpe \cite{properties}.

\begin{lemma}The resultant graph $\Gamma_{g,2}^*$ has girth $g$.\label{lem-2c}\end{lemma}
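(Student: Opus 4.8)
The plan is to mirror the structure of the proof of Lemma \ref{lem-1c}, since the two constructions are closely analogous. As before, I would first establish the upper bound on the girth: because we chose edge $e$ so that at least one cycle of length $g$ in $\Gamma_g$ avoids $e$, each of the three copies of $\Gamma_g$ still contains an untouched cycle of length $g$ in $\Gamma_{g,2}^*$. Hence the girth is at most $g$, and it remains only to show that no cycle shorter than $g$ is created.

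Next I would classify the cycles of $\Gamma_{g,2}^*$ according to how they interact with the two introduced vertices and the 2-bond. Write the two introduced vertices as $u$ (top) and $w$ (bottom); each is adjacent to three edges leading into the three broken copies of $\Gamma_g$. A cycle that stays entirely within a single copy of $\Gamma_g$ is, except for the lengthening caused by the extra vertex, a cycle of the original graph, so by the same argument as in Lemma \ref{lem-1c} it has length at least $g$. The only genuinely new cycles are those that pass through $u$ or $w$ and therefore travel between two different copies. Any such cycle must enter a copy of $\Gamma_g$ through one of the two half-edges created by breaking $e$, traverse a path inside that copy, and leave through the other half-edge; crucially, that internal path together with the broken edge endpoints corresponds to a path in $\Gamma_g$ between the two endpoints of $e$, whose length is at least $1$ (and in fact such a traversal, closed up using the original edge $e$, would form a cycle through $e$ in $\Gamma_g$, hence has internal length at least $g-1$).

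The key step is then to count: a new cycle visiting $k\ge 2$ copies must pass through $u$ and $w$ (each traversal between copies routes through one of the introduced vertices), and for each copy it enters it contributes an internal path of length at least $g-1$ between the two half-edges of the broken $e$, plus the edges incident to $u$ and $w$. Adding these contributions shows any such inter-copy cycle is already at least as long as $g$, and typically strictly longer. Thus no cycle of length less than $g$ is introduced, and combined with the upper bound the girth is exactly $g$.

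The main obstacle I anticipate is making the bookkeeping for the inter-copy cycles fully rigorous, in particular verifying that a traversal of a copy of $\Gamma_g$ between the two half-edges of the broken edge $e$ genuinely has length at least $g-1$. This rests on the observation that any such internal path, completed by the original edge $e$, is a cycle through $e$ in $\Gamma_g$ and hence has length at least the girth $g$; one must be careful to confirm that the two half-edges are indeed the endpoints of $e$ and that the extra subdivision vertices inserted into the 2-bond edges only lengthen, never shorten, these paths. Once that lemma-within-the-proof is pinned down, the remaining inequalities are routine.
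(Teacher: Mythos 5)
Your proof is correct and rests on the same key fact as the paper's: any cycle not confined to a single copy must pass through both introduced vertices, and each internal traversal of a copy, closed up with the original edge $e$, gives a cycle through $e$ in $\Gamma_g$, so the traversal has length at least $g-1$. The difference is the direction of the argument. The paper argues forward: it takes a cycle of length $c$ through $e$ in $\Gamma_g$ and observes that its ``equivalent cycle'' in $\Gamma_{g,2}^*$ is strictly longer. You argue backward: you classify every cycle of $\Gamma_{g,2}^*$ (confined to one copy, or passing through an introduced vertex) and bound each class below by $g$. Your direction is the logically complete one, since the paper never verifies that every cycle of the new graph arises as the ``equivalent'' of a cycle of $\Gamma_g$; your classification, with the observation that a cycle through one introduced vertex is forced through the other and hence has length at least $2(g-1)+4 > g$, closes that gap. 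One small inaccuracy: in this construction no vertex is inserted inside any copy (that happens only in the bridge construction of Lemma \ref{lem-1c}), so a cycle confined to one copy is exactly a cycle of $\Gamma_g$ avoiding $e$ --- there is no ``lengthening'' to account for --- but this slip is harmless, as it only makes the bound you need easier to meet.
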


\begin{proof}Similar to the proof of Lemma \ref{lem-1c}, there are cycles of length $g$ that remain untouched by this construction, so it suffices to ensure that the construction does not introduce any smaller cycles. Consider a cycle of length $c$ in $\Gamma_g$ that passes through edge $e$. In $\Gamma_{g,2}^*$ the equivalent cycle must pass out of respective subgraph, through an introduced vertex, through a second subgraph, and back through the other introduced vertex. It is clear that this cycle has length greater than $c$, and so only longer cycles have been.\end{proof}

A naive upper bound of $3n(3,g) + 2$ can now be given for the size of the smallest non-Hamiltonian 3--regular, 2--edge-connected graph with girth $g$. Next, a construction is given that produces non-Hamiltonian 3--regular, 3--edge-connected graphs with chosen girth $g$.

Consider the Petersen graph \cite{petersen}, which is the smallest non-Hamiltonian 3--regular, 3--edge-connected graph. Then, consider again $\Gamma_g$ as above. Select a vertex $v$ in $\Gamma_g$ such that there is at least one cycle of length $g$ in $\Gamma_g$ that does not traverse vertex $v$. Then, by taking ten copies of $\Gamma_g$, we can construct a new 3--regular, 3--edge-connected graph by the following process. For each copy of $\Gamma_g$, remove the vertex $v$, leaving three broken edges. Then, replace a vertex of the Petersen graph with the remaining subgraph of the copy of $\Gamma_g$ such that the three broken edges take the place of the three incident edges of the removed vertex. Call the resulting graph $\Gamma_{g,3}^*$. In the language used in Baniasadi et al \cite{genetictheory} this construction is equivalent to performing ten type 3 breeding operations, starting with the Petersen graph and a copy of $\Gamma_g$, and continuing with the other copies of $\Gamma_g$. An example, for $g = 3$, is given in Figure \ref{fig-con3}.

\begin{figure}[h!]
\centering\includegraphics[scale=0.5]{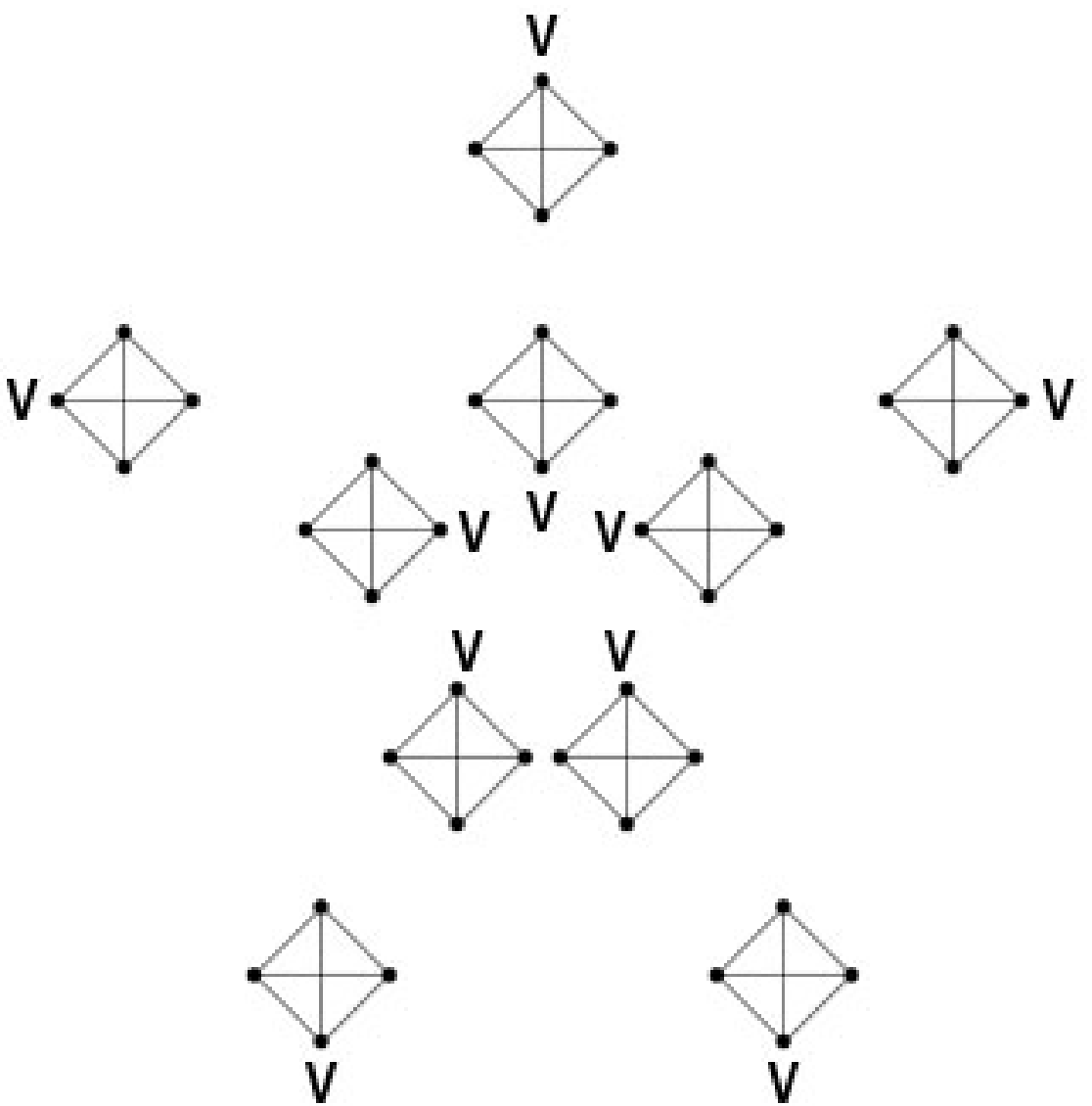} \includegraphics[scale=0.5]{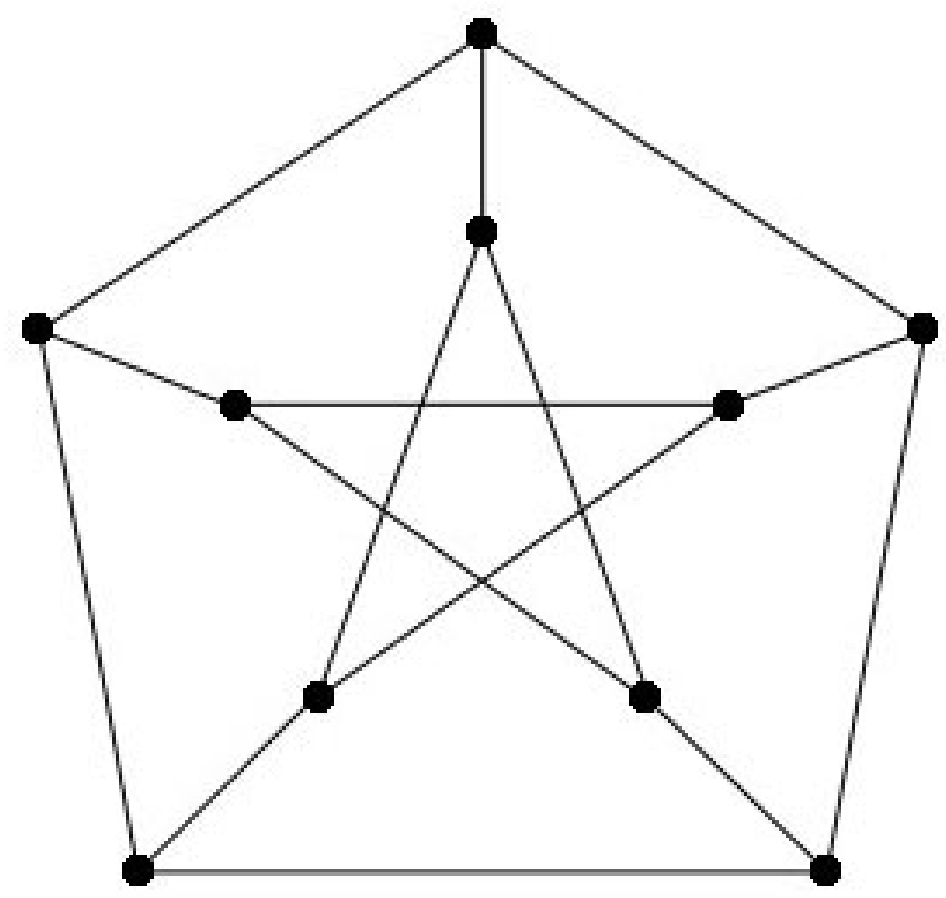}\\
\includegraphics[scale=0.5]{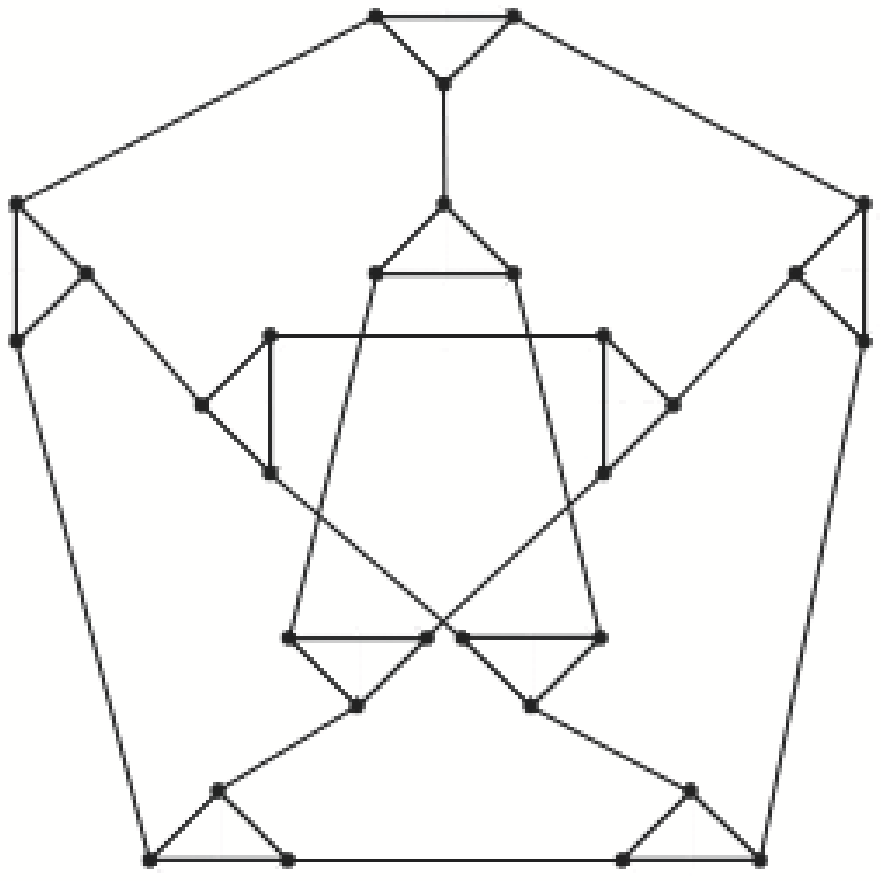}
\caption{Ten copies of $\Gamma_3$, the Petersen graph, and the resulting $\Gamma_{3,3}^*$ produced by the 3--edge-connected construction.}
\label{fig-con3}
\end{figure}

Since each subgraph corresponding to a copy of $\Gamma_g$ is connected to the rest of the graph only by three edges, it is clear that, once entered, the entirety of any given subgraph must be traversed before exiting if a Hamiltonian cycle in $\Gamma_{g,3}^*$ is desired. Therefore, in $\Gamma_{g,3}^*$, each subgraph functions equivalently to a vertex, and therefore the Petersen graph shares its non-Hamiltonicity with $\Gamma_{g,3}^*$. An alternative proof using results Baniasadi and Haythorpe \cite{properties} arises from noticing that $\Gamma_{g,3}^*$ has the non-Hamiltonian Petersen graph as an ancestor gene, violating the necessary condition for $\Gamma_{g,3}^*$ to be Hamiltonian.

\begin{lemma}The resultant graph $\Gamma_{g,3}^*$ has girth $g$.\label{lem-3c}\end{lemma}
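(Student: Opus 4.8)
The plan is to follow the template of Lemmas \ref{lem-1c} and \ref{lem-2c}: first observe that the girth is at most $g$ because the $g$-cycle in each copy of $\Gamma_g$ that avoids the removed vertex $v$ survives intact in $\Gamma_{g,3}^*$, and then show that no shorter cycle is created. The upper bound is immediate, so the entire difficulty lies in establishing the lower bound of $g$.

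I would classify every cycle $C$ of $\Gamma_{g,3}^*$ by how it meets the ten subgraphs, each of which is a copy of $\Gamma_g$ with $v$ deleted; call each such subgraph a \emph{gadget}. Since a gadget is joined to the rest of $\Gamma_{g,3}^*$ only by the three edges formerly incident to $v$, a simple cycle can enter or leave a gadget solely along these three edges. A parity argument applied to the restriction of $C$ to a gadget (the number of odd-degree vertices in any graph is even) shows that the number of these attaching edges used by $C$ in any one gadget is even, hence $0$ or $2$. Consequently $C$ is either contained in a single gadget, or it meets a set of gadgets, entering and leaving each one exactly once along a single internal path.

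In the first case $C$ is a cycle of the subgraph $\Gamma_g \setminus v$, hence a cycle of $\Gamma_g$, so its length is at least $g$. In the second case I would bound the length from below using two ingredients. First, each internal path of $C$ joins two distinct former neighbours $a,b$ of $v$; appending the two edges $av$ and $vb$ yields a genuine cycle of $\Gamma_g$ through $v$, whose length is therefore at least $g$, so every internal path has length at least $g-2$. Second, contracting each gadget to a point carries $C$ to a simple cycle of the Petersen graph, which has girth $5$; hence $C$ meets at least five gadgets and uses at least five inter-gadget edges. Combining these, the length of $C$ is at least $5(g-2)+5 = 5g-5 \geq g$ for every $g \geq 3$, which completes the lower bound and, with the upper bound, gives girth exactly $g$.

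The main obstacle is the case analysis for cycles that cross between gadgets: one must confirm that the restriction of a simple cycle to a gadget really is a single path between two of the three attaching vertices (so that the ``even, and at most three'' count forces exactly two attaching edges), and one must verify that the contraction of $C$ is a bona fide cycle, rather than a degenerate closed walk, in the Petersen graph, so that the girth bound of $5$ may legitimately be invoked. Once these two structural facts are secured, the length estimate is routine arithmetic.
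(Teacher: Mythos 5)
Your proof is correct, and it is in fact more complete than the paper's own argument, which is a brief sketch. The paper takes the same two-part skeleton (the surviving $g$-cycle gives the upper bound; then one must rule out shorter cycles), but for the lower bound it argues in the opposite direction: it starts from a cycle of length $c$ through $v$ in $\Gamma_g$, considers its ``equivalent cycle'' in $\Gamma_{g,3}^*$, and observes that even if that cycle could return immediately to the original subgraph it would still have length at least $c$. This leaves implicit exactly the structural fact you isolate: that an \emph{arbitrary} cycle of $\Gamma_{g,3}^*$ not confined to one gadget meets each gadget in a single path between two distinct attachment vertices (your parity argument on the three-edge cuts), so that every such cycle really does decompose into internal paths of length at least $g-2$ joined by attaching edges. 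You argue in the logically correct direction -- bounding all cycles of the new graph rather than images of old ones -- which is the genuine content the paper glosses over. One remark: your invocation of the Petersen graph's girth is sound but unnecessary for the statement. Once you know a cycle $C$ crossing between gadgets contains, inside any one gadget it meets, an internal path of length at least $g-2$ together with the two attaching edges of that gadget, you already get $|C| \geq (g-2) + 2 = g$; this cheaper count is precisely the paper's ``even if it were possible to return immediately'' remark. Your stronger estimate $|C| \geq 5(g-2)+5 = 5g-5$ is a bonus (it reflects that inter-gadget cycles are much longer than $g$), but the lemma does not need it.
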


\begin{proof}Again, as in the proofs of Lemma \ref{lem-1c} and Lemma \ref{lem-2c}, a cycle of length $g$ remains untouched by this construction, so it suffices to ensure that the construction does not introduce any smaller cycles. Consider a cycle of length $c$ in $\Gamma_g$ that passes through vertex $v$. An equivalent cycle in $\Gamma_g^*$ travels to another subgraph. Even if it were possible to return immediately from the second subgraph back to the first, the resulting cycle would still be of length $c$, and so no shorter cycles have been introduced.\end{proof}

Of course, it is not possible to return immediately from the second subgraph, and in fact would require a path of length at least $c-1$ to reach a vertex that would allow a return to the original subgraph.

A naive upper bound of $10n(3,g) - 10$ can now be given for the size of the smallest non-Hamiltonian 3--regular, 3--edge-connected graph with girth $g$. However, this upper bound is expected to be a large overestimate. Indeed, as is demonstrated for small girth in the following section, the smallest non-Hamiltonian 3--regular graphs appear to be typically 3--edge-connected.

The concept of a $(k,g)$--cage can now be generalised to a $(k,g,e)$--prison, specifically, a non-Hamiltonian $k$--regular, $e$--edge-connected graph with girth $g$ that contains the minimum possible number of vertices. Using equivalent notation to that of cages, the number $\bar{n}(k,g,e)$ is defined to be the number of vertices in a $(k,g,e)$--prison. This manuscript has given constructive proofs of existence of $(k,g,e)$--prisons for $k = 3$, and furthermore if, for a given $k^*$, a constructive procedure of generating $k^*$--regular graphs of arbitrary girth $g$ is discovered for any given $k^*$, then this manuscript also gives a constructive proof of existence of $(k^*,g,1)$--prisons.

\section{Small prisons}\label{sec-prisons}

To conclude, the sizes of $(3,g,e)$--prisons are given for small $g$. Empty spaces correspond to as-of-yet unknown values. Values with a + sign indicate the prison is not unique.

\begin{table}[h!]
\begin{center}
\begin{tabular}{|l|c|c|c|c|c|}
  \hline
  \;\;\;\;\;$g$ & {\bf 3} & {\bf 4} & {\bf 5} & {\bf 6} & {\bf 7} \\
  $e$ & & & & & \\ \hline
  {\bf 1} & 10 & 14 & 22 & 30 & \\ \hline
  {\bf 2} & 14+ & 16 & 20 & 42+ & \\ \hline
  {\bf 3} & 12 & 14 & 10 & 28 & 28 \\ \hline
\end{tabular}
\caption{Values of $\bar{n}(3,g,e)$, the number of vertices in a $(3,g,e)$--prison.} \label{tab-prisons}
\end{center}
\end{table}

The values in Table \ref{tab-prisons} give rise to the following conjecture.

\vspace*{0.5cm}\begin{conjecture}For all $g \geq 4$, the value $n(3,g,3) \leq n(3,g,e)$ for $e = \{1, 2\}$.\end{conjecture}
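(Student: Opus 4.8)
The plan is to split the single statement into the two independent inequalities $\bar n(3,g,3)\le\bar n(3,g,1)$ and $\bar n(3,g,3)\le\bar n(3,g,2)$ (reading $n(3,g,e)$ in the statement as the prison size $\bar n(3,g,e)$ defined above), and to prove each by pitting a structural \emph{lower} bound on the low-connectivity side against a construction-based \emph{upper} bound on the $3$-edge-connected side. The data in Table~\ref{tab-prisons} make clear that the margins are thin --- at $g=4$ the first inequality is an equality --- so the two bounds must be sharp enough to touch, and the excluded case $g=3$ (where $\bar n(3,3,3)=12>10=\bar n(3,3,1)$) shows that no argument can be entirely girth-free.

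For the bridge inequality I would first analyse the fragments produced by deleting the bridge of a minimum $(3,g,1)$-prison. Each fragment is connected, has exactly one vertex of degree $2$, is otherwise cubic, and inherits girth at least $g$ (deleting the bridge destroys no cycles, since a bridge lies on none). Centring the cubic Moore tree at a degree-$3$ vertex far from the degree-$2$ vertex gives a Moore-type lower bound showing each fragment has essentially $n(3,g)$ vertices, so that $\bar n(3,g,1)$ is at least roughly $2n(3,g)$; for $g=4$ one checks the fragment minimum is exactly $n(3,4)+1=7$, recovering $\bar n(3,4,1)=14$. Against this I would try to establish an upper bound on $\bar n(3,g,3)$ that is appreciably smaller than the naive $10n(3,g)-10$ of Section~\ref{sec-non-bridge} --- ideally of order $2n(3,g)$ --- by replacing the Petersen scaffold with the \emph{smallest} non-Hamiltonian $3$-edge-connected cubic graph and, where possible, letting the ten copies of $\Gamma_g$ share structure rather than be disjoint. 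The point where $g=5$ forces $\bar n(3,5,3)=n(3,5)=10$ (the Petersen graph is itself the $(3,5)$-cage) suggests such economical constructions exist.

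The $2$-edge-connected inequality I would attack through the structure of a $2$-edge-cut in a cubic graph: deleting the cut of a minimum $(3,g,2)$-prison yields two fragments, each with two degree-$2$ attachment vertices, and the same Moore-tree counting bounds their combined size from below. The cleaner route, if it can be made to work, is to organise the graph by its $2$-edge-cuts into a tree of $3$-edge-connected pieces and argue that the non-Hamiltonicity must either reside in one such piece --- which one then completes to a genuine $3$-edge-connected girth-$g$ non-Hamiltonian graph no larger than the original --- or be of the ``tree'' type already handled by the bridge analysis.

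The main obstacle, I expect, is the upper bound on $\bar n(3,g,3)$: one must exhibit, for \emph{every} $g\ge 4$, a $3$-edge-connected cubic graph that is simultaneously non-Hamiltonian, of girth exactly $g$, and small enough to undercut $\min\{\bar n(3,g,1),\bar n(3,g,2)\}$, and all three properties fight one another --- shrinking the graph tends to lower the girth or restore a Hamiltonian cycle. Because the desired inequality is an equality at $g=4$, the construction must be optimal there rather than merely good, which rules out any argument that only controls leading-order growth; closing this constant-sized gap uniformly in $g$ is where I would expect the real work, and likely a new idea, to lie.
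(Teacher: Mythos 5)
The statement you are trying to prove is a \emph{conjecture}: the paper offers no proof of it, only the empirical evidence of Table \ref{tab-prisons} for $g = 4, 5, 6$, so there is no argument of the paper's to measure yours against --- the question is whether your proposal itself closes the problem, and it does not. You concede the decisive gap yourself: your strategy needs, for every $g \geq 4$, a non-Hamiltonian, $3$--edge-connected cubic graph of girth exactly $g$ with roughly $2n(3,g)$ vertices (and with exactly the right constant at $g = 4$, where the conjectured inequality is an equality), but no such construction is exhibited; the only uniform construction available is the paper's, which gives $10n(3,g) - 10$, far above any lower bound you could extract for $\bar{n}(3,g,1)$ or $\bar{n}(3,g,2)$. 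A plan whose key ingredient is ``find a much better construction'' is a research programme, not a proof, and the conjecture remains open.

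Two of the steps you treat as routine also fail as stated. First, Moore-tree counting on a bridge fragment bounds its size by the Moore bound, not by the cage number $n(3,g)$; these differ already at $g = 7$ (Moore bound $22$ versus $n(3,7) = 24$), and since your argument must be tight to within additive constants (equality holds at $g = 4$), replacing $n(3,g)$ by the Moore bound is not a harmless approximation --- the claim that each fragment has ``essentially $n(3,g)$'' vertices is precisely the kind of leading-order statement you yourself note is insufficient. Second, your ``cleaner route'' for the $2$--edge-connected case --- decomposing along $2$--edge-cuts into a tree of $3$--edge-connected pieces and locating the non-Hamiltonicity in one piece --- is contradicted by the paper's own construction $\Gamma_{g,2}^*$: that graph is $2$--edge-connected and non-Hamiltonian, yet every $3$--edge-connected piece of its cut decomposition (three copies of $\Gamma_g$ with the edge $e$ restored, together with the $3$--bond on the two inserted vertices) is Hamiltonian. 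Non-Hamiltonicity of a cubic graph does not localise to the pieces of its $2$--cut tree, so that reduction cannot work without a genuinely new idea.
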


Several of the prisons with small girth are known in literature. The $(3,3,3)$--prison is Flower snark ${\bf J}_3$ \cite{flower} (also known as Tietze's graph), the $(3,5,3)$--prison is the Petersen graph \cite{petersen}, the $(3,6,3)$--prison is the Flower snark ${\bf J}_7$ \cite{flower}, and the $(3,7,3)$--prison is the Coxeter graph \cite{coxeter}. The $(3,6,1)$--prison arises from two copies of the Heawood graph \cite{harary}, while at least one $(3,6,2)$--prison arises from the Heawood graph and the Flower snark ${\bf J}_7$. All other prisons displayed in Figure \ref{figur} arise from combinations of $K_4$, $K_{3,3}$ and the Petersen graph.

There are exactly two $(3,3,2)$--prisons. It is not currently known how many $(3,6,2)$--prisons exist, though some may be obtained by breaking an edge in both the Heawood graph and the Flower snark ${\bf J}_7$ and joining the two sets of broken edges together to form a 2-bond. All 3--regular graphs with girth 8 containing up to and including 44 vertices were checked, and all are Hamiltonian, so a $(3,8,e)$--prison must contain somewhere between 46 and 62 vertices (the latter bound is obtained by using the bridge construction in Section \ref{sec-bridge} on two copies of the Tutte-Coxeter graph \cite{tutte} on 30 vertices.)

Future work on this topic might seek to address not only edge-connectivity, but cyclic edge-connectivity, that is, the size of the minimal edge-cutset that separates a graph into multiple components each containing a cycle. It is worth noting that the each of the $(3,5,3)$, $(3,6,3)$ and $(3,7,3)$--prisons have cyclic connectivity equal to their girth, leading to the second conjecture of this manuscript.

\vspace*{0.5cm}\begin{conjecture}The cyclic edge-connectivity of a $(3,g,3)$--prison is equal to $g$, for all $g \geq 5$.\end{conjecture}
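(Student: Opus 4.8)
The plan is to prove the two inequalities separately, the upper bound $c\lambda(G) \le g$ being routine and the lower bound $c\lambda(G) \ge g$ carrying the real difficulty. For the upper bound, let $G$ denote a $(3,g,3)$--prison and let $C$ be a shortest cycle, of length $g$. Since $g \ge 5$, no chord of $C$ can exist (a chord would split $C$ into two cycles, the shorter of length less than $g$), so each of the $g$ vertices of $C$ is incident to exactly one edge leaving $C$, and these $g$ edges form an edge cut separating $V(C)$ from the rest. A short counting argument shows the complement $G - V(C)$ still contains a cycle: it has $N - g$ vertices and $(3N - 4g)/2$ edges, which is at least $N - g$ whenever $N \ge 2g$, a bound satisfied by every cubic graph of girth $g \ge 5$ (with the extremal case $g = 5$, $N = 10$ checked directly on the Petersen graph). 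Hence these $g$ edges constitute a cyclic edge cut and $c\lambda(G) \le g$.

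For the lower bound I would argue by contradiction, exploiting the minimality built into the definition of a prison. Suppose $G$ admits a cyclic edge cut $S$ with $|S| = k$ and $3 \le k < g$, separating $G$ into parts $A$ and $B$, each containing a cycle; three-edge-connectivity forces $k \ge 3$, and among all such cuts I would fix one with $k$ minimum. The strategy is to ``cap'' each side: replace the $k$ dangling half-edges of $A$ by attaching them to a small gadget (for $k = 3$, simply one new vertex) to obtain a cubic graph $A'$, and likewise $B'$, with $|A'|, |B'| < |G|$. One then tries to show that at least one of $A'$, $B'$ is again a non-Hamiltonian, $3$--edge-connected cubic graph of girth $g$, contradicting that $G$ is a smallest such graph.

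The main obstacle is showing that this surgery preserves all four properties simultaneously. Girth preservation requires that the capping gadget introduce no cycle shorter than $g$; because the cut has fewer than $g$ edges, any short cycle through the gadget would have to re-enter the same side quickly, and I would bound its length from below using the girth of $G$ together with the fact that the $k$ cut edges cannot be mutually close (else they would themselves close up a cycle shorter than $g$). Three--edge-connectivity of the capped pieces should follow from that of $G$ and the minimality of the cut. The genuinely hard step is transferring non-Hamiltonicity: one must rule out the possibility that $G$ is non-Hamiltonian only because of the global interaction across $S$, while $A'$ and $B'$ are each individually Hamiltonian. This calls for a gluing lemma asserting that Hamiltonian traversals of $A'$ and $B'$ can be spliced along $S$ into a Hamiltonian cycle of $G$ — but for $k = 3$ a Hamiltonian cycle of $A'$ only certifies a Hamiltonian path in $A$ between \emph{some} pair of cut endpoints, and the two sides may realise incompatible pairs, so the naive gluing can fail. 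Establishing a gadget strong enough to control which pairs of the $k < g$ cut edges a Hamiltonian traversal of each side may use — a parity-constrained matching problem on the cut — is where I expect the difficulty to concentrate, and is presumably why the statement remains a conjecture.
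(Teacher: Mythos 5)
This statement is Conjecture 2 of the paper: the author supplies no proof at all, only the observation that the three known prisons with $g = 5, 6, 7$ (the Petersen graph, the Flower snark $\mathbf{J}_7$, and the Coxeter graph) each happen to have cyclic edge-connectivity equal to their girth. So there is no paper proof to compare against, and your attempt must stand on its own. Its first half does stand: the argument that $c\lambda(G) \le g$ is correct. A shortest cycle $C$ in a graph of girth $g \ge 4$ is chordless, so in a cubic graph exactly $g$ edges leave $V(C)$, and your count that $G - V(C)$ has $(3N-4g)/2$ edges on $N - g$ vertices, with $(3N-4g)/2 \ge N-g$ exactly when $N \ge 2g$, forces a cycle in the complement (this inequality holds for every cubic graph of girth $g \ge 5$ by the Moore bound, and even in the extremal case $N = 2g$ the complement has as many edges as vertices, hence contains a cycle, since a forest has fewer edges than vertices). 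This half is fine and needs no special check of the Petersen graph.

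The lower bound, however, has genuine gaps beyond the one you acknowledge. First, your girth-preservation claim --- that the $k$ cut edges ``cannot be mutually close (else they would themselves close up a cycle shorter than $g$)'' --- is false. Two cut edges $uu'$, $vv'$ with $u, v \in A$ adjacent create no short cycle in $G$ so long as $\mathrm{dist}_B(u',v') \ge g - 3$: the girth of $G$ constrains only the sum $\mathrm{dist}_A(u,v) + \mathrm{dist}_B(u',v') + 2$, not each side separately, so capping $A$ by a single vertex can create a triangle. Second, even when capping does preserve girth at least $g$, your contradiction requires the surviving non-Hamiltonian piece to have girth \emph{exactly} $g$, because a prison is girth-specific: a smaller non-Hamiltonian, 3--edge-connected cubic graph of girth $g' > g$ contradicts nothing about the $(3,g,3)$--prison. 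This is not a technicality --- Table 1 of the paper shows $\bar{n}(3,g,3)$ is not monotone in $g$ (indeed $\bar{n}(3,5,3) = 10 < 12 = \bar{n}(3,3,3) < 14 = \bar{n}(3,4,3)$), so minimality cannot be transferred across girths. Third, the non-Hamiltonicity transfer you flag is indeed the fatal obstruction, and it fails concretely: a Hamiltonian cycle of $G$ crosses a 3--cut exactly twice, so $G$ is Hamiltonian if and only if the two capped sides admit Hamiltonian cycles through a \emph{common} pair of cut edges; both $A'$ and $B'$ can be Hamiltonian through incompatible pairs while $G$ is not. The paper's own 2--edge-connected construction exhibits exactly this mechanism --- every piece is a copy of the Hamiltonian graph $\Gamma_g$, yet the whole is non-Hamiltonian. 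Your plan correctly isolates where the difficulty lies, but as it stands it establishes only the easy inequality; the conjecture remains open.
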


It is easy to see that the non-Hamiltonian 3-regular, 3-connected graphs produced by the construction in Section \ref{sec-non-bridge} of this manuscript always have cyclic connectivity of 3. Whether or not non-Hamiltonian 3-regular graphs exist with arbitrarily large cyclic connectivity exist at all is still an open problem.

\begin{figure}[htp!]
  \centering
      \label{figur}\caption{Examples of prisons for small girths.}

  \subfigure[The unique $(3,3,1)$--prison]{\label{figur:11}\includegraphics[scale=0.45]{prison331.eps}}
  \subfigure[The first $(3,3,2)$--prison]{\label{figur:12}\includegraphics[scale=0.45]{prison332_1.eps}}
  \\
  \subfigure[The second $(3,3,2)$--prison]{\label{figur:13}\includegraphics[scale=0.45]{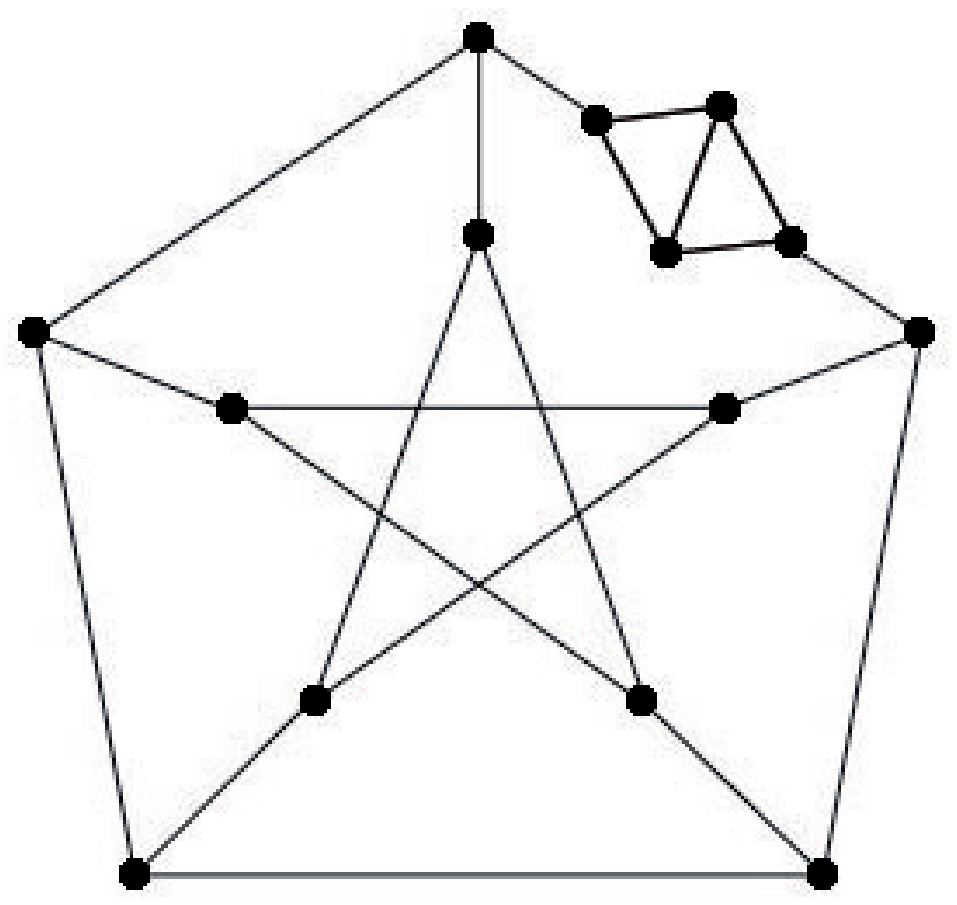}}
  \subfigure[The unique $(3,3,3)$--prison]{\label{figur:21}\includegraphics[scale=0.45]{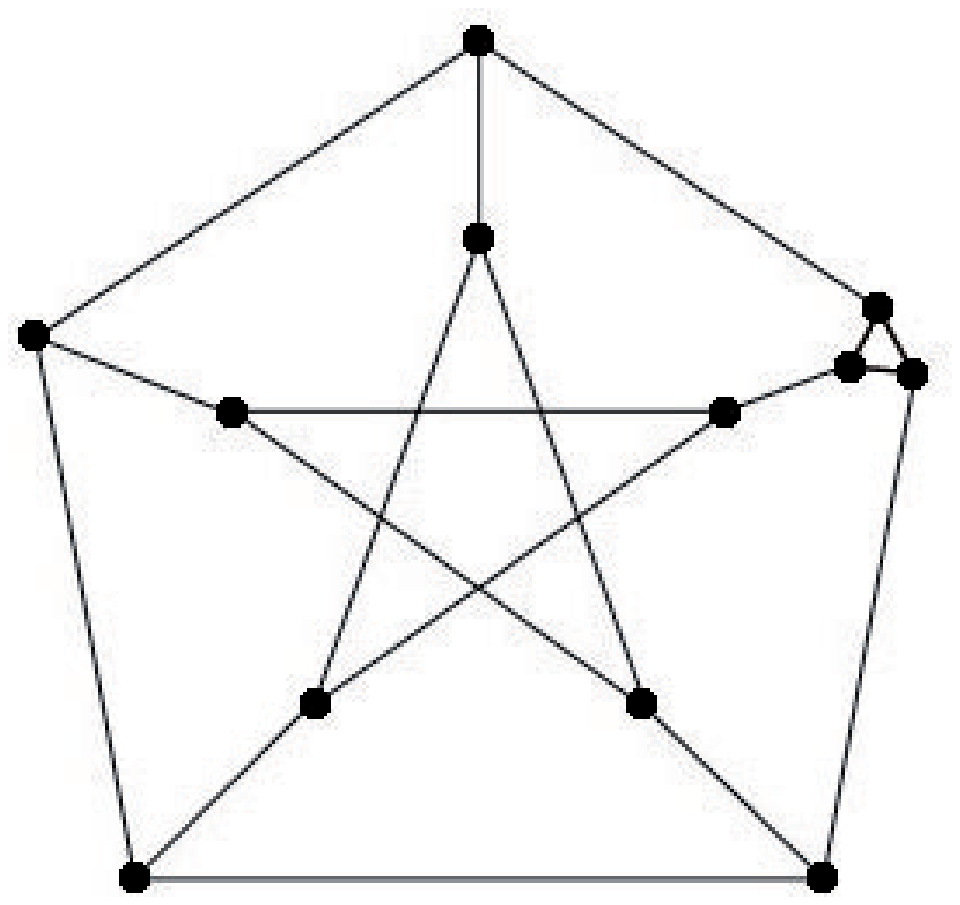}}
\\
  \subfigure[The unique $(3,4,1)$--prison]{\label{figur:22}\includegraphics[scale=0.45]{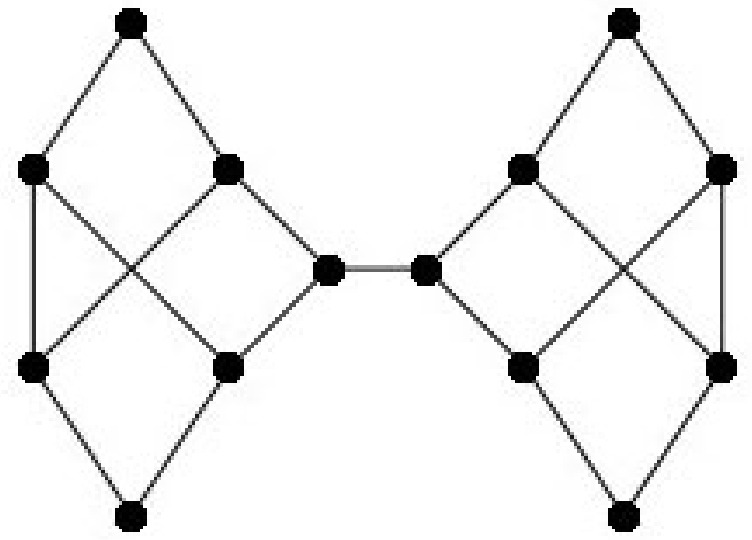}}
  \subfigure[The unique $(3,4,2)$--prison]{\label{figur:23}\includegraphics[scale=0.45]{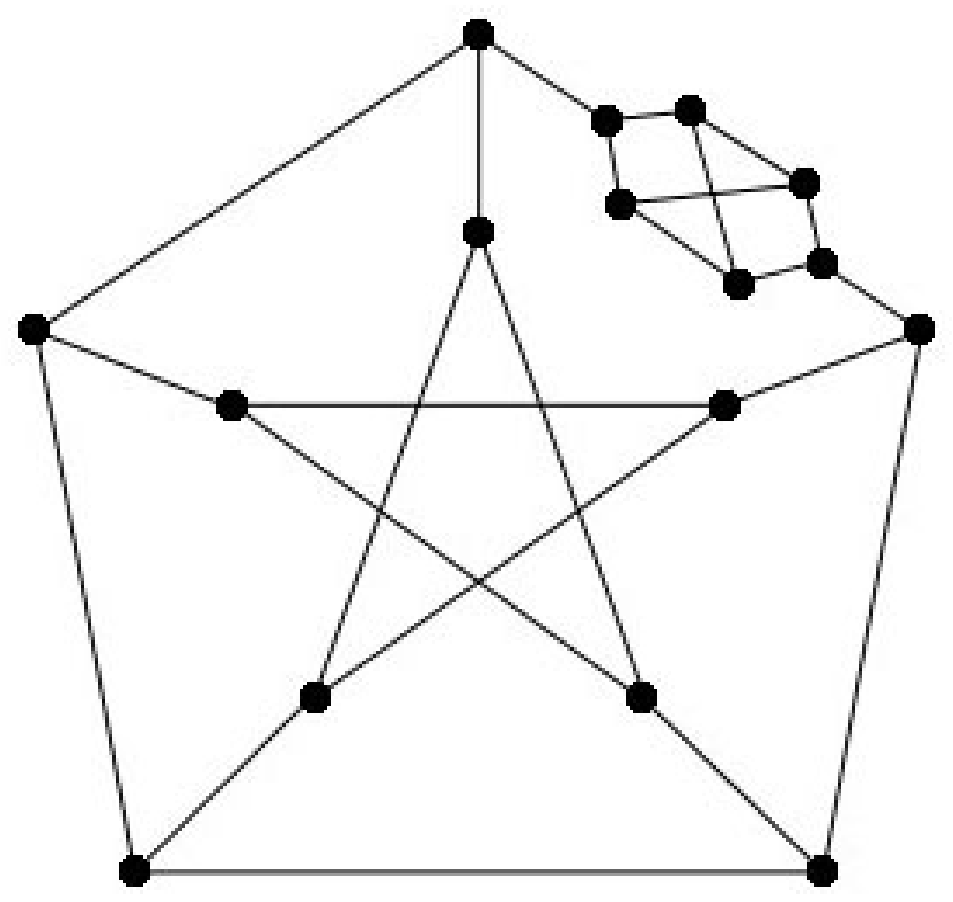}}
\\
    \subfigure[The unique $(3,4,3)$--prison]{\label{figur:31}\includegraphics[scale=0.45]{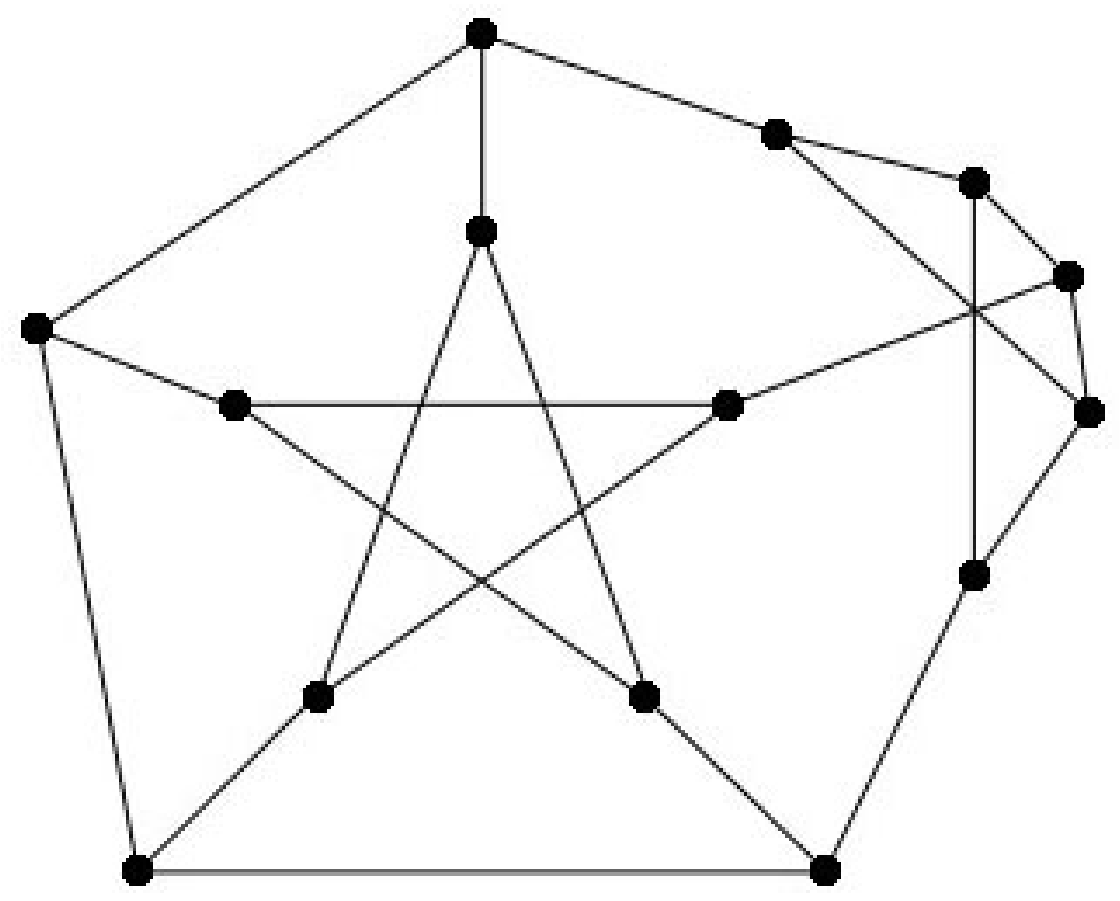}}
  \subfigure[The unique $(3,5,1)$--prison]{\label{figur:32}\includegraphics[scale=0.45]{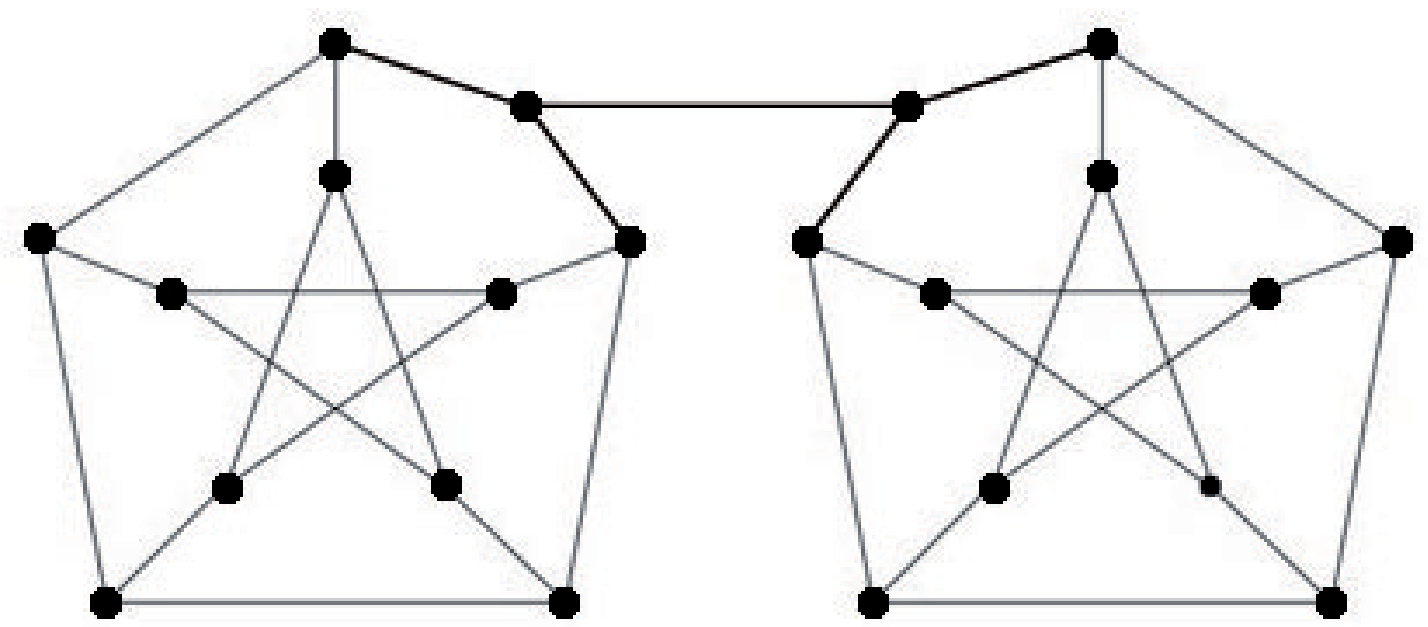}}
\end{figure}

\begin{figure}[htp!]
       \centering
        \label{figur}\caption{Examples of prisons for small girths.}
  \subfigure[The unique $(3,6,1)$--prison]{\label{figur:42}\includegraphics[scale=0.4]{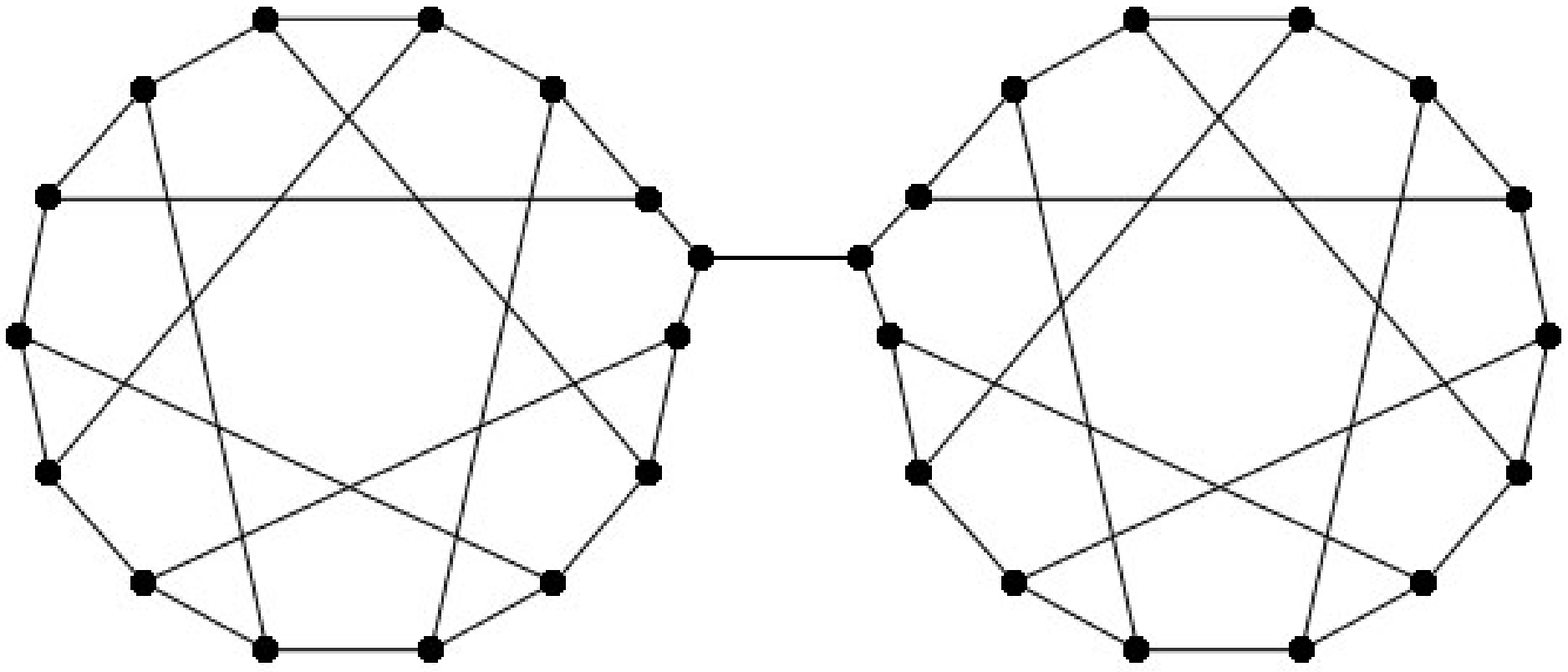}}
  \hspace*{0.45cm}\subfigure[The unique $(3,5,2)$--prison]{\label{figur:33}\includegraphics[scale=0.45]{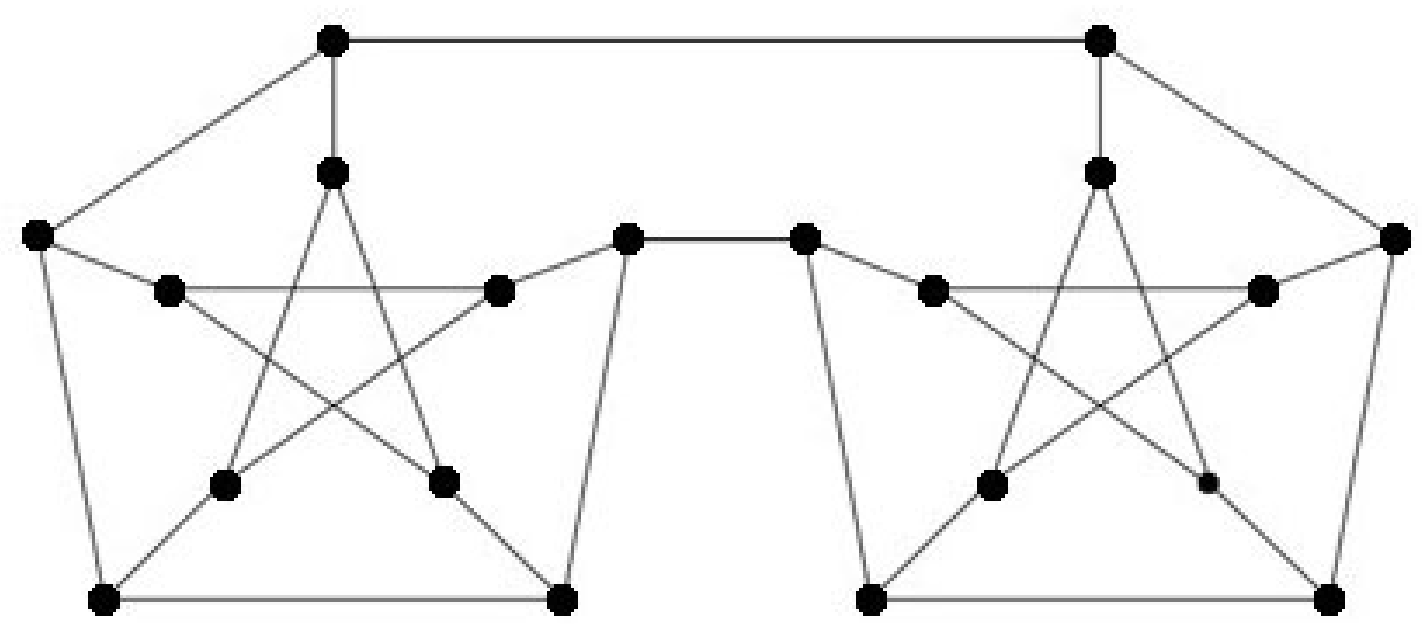}}
  \subfigure[The unique $(3,5,3)$--prison]{\label{figur:41}\includegraphics[scale=0.45]{prison353.eps}}
  \\
% \end{figure}
% \newpage
%  \begin{figure}[htp!]
%  \centering
%   \label{figur}\caption{Examples of prisons for small girths.}
  \subfigure[One example of a $(3,6,2)$--prison]{\label{figur:42}\includegraphics[scale=0.45]{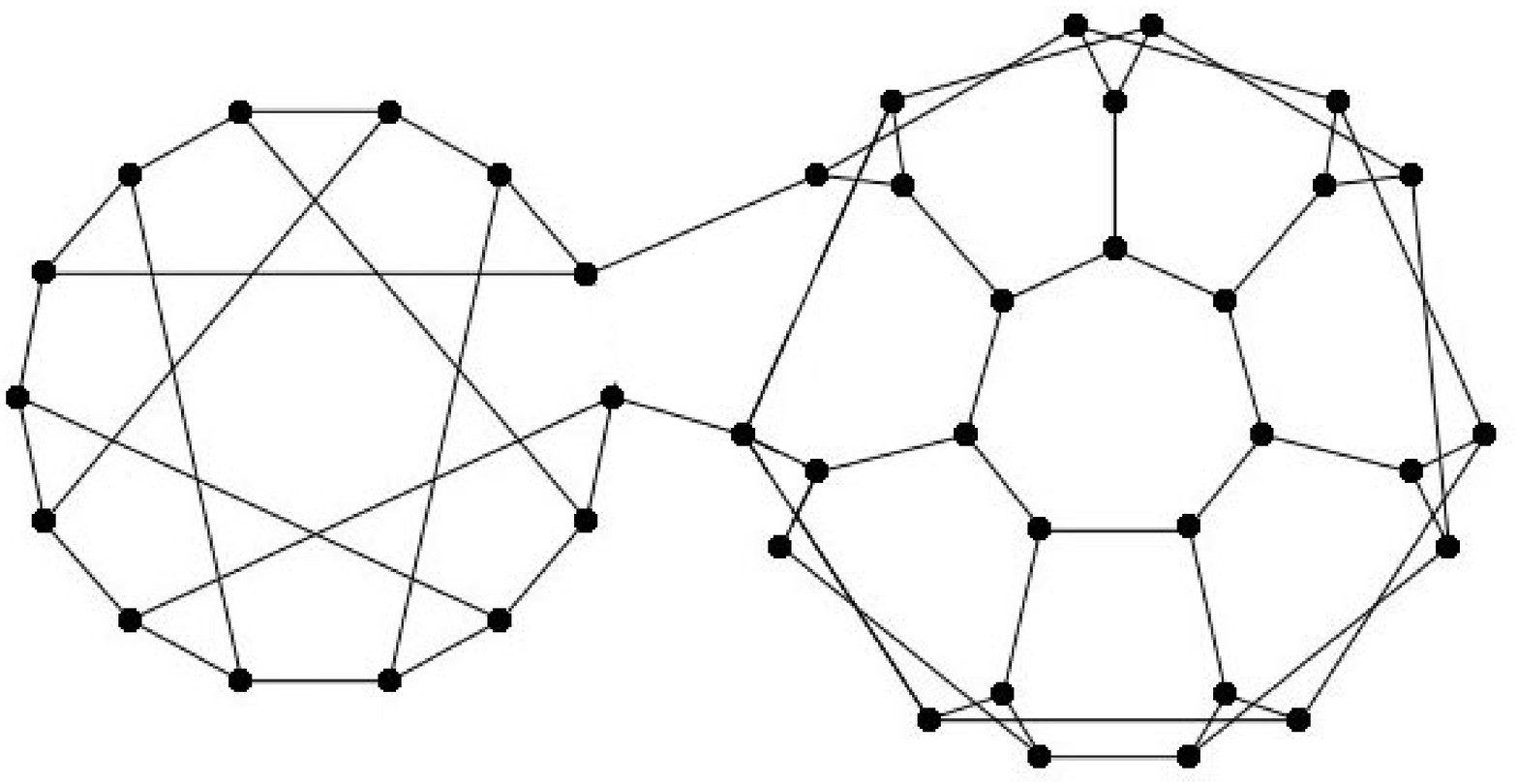}}\\
      \subfigure[The unique $(3,6,3)$--prison]{\label{figur:43}\includegraphics[scale=0.45]{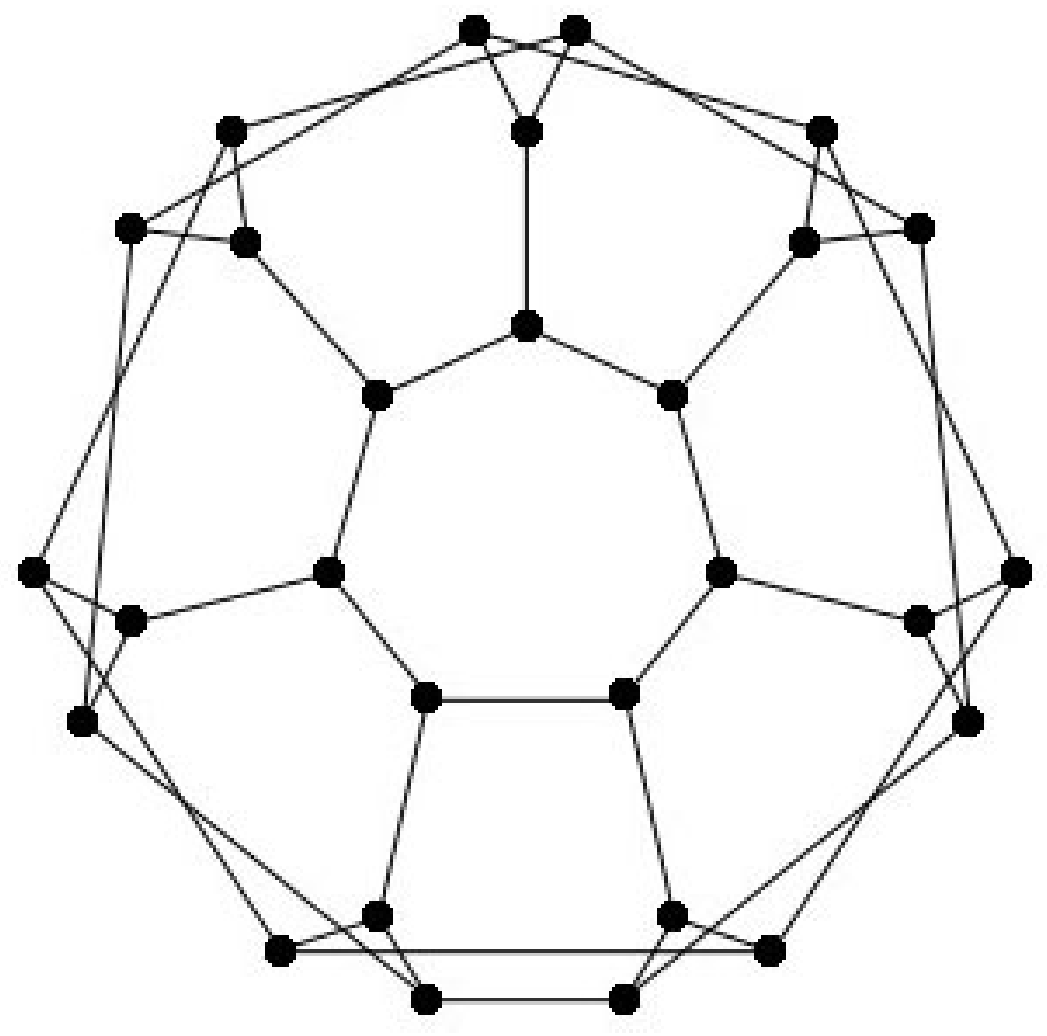}}
  \subfigure[The unique $(3,7,3)$--prison]{\label{figur:51}\includegraphics[scale=0.45]{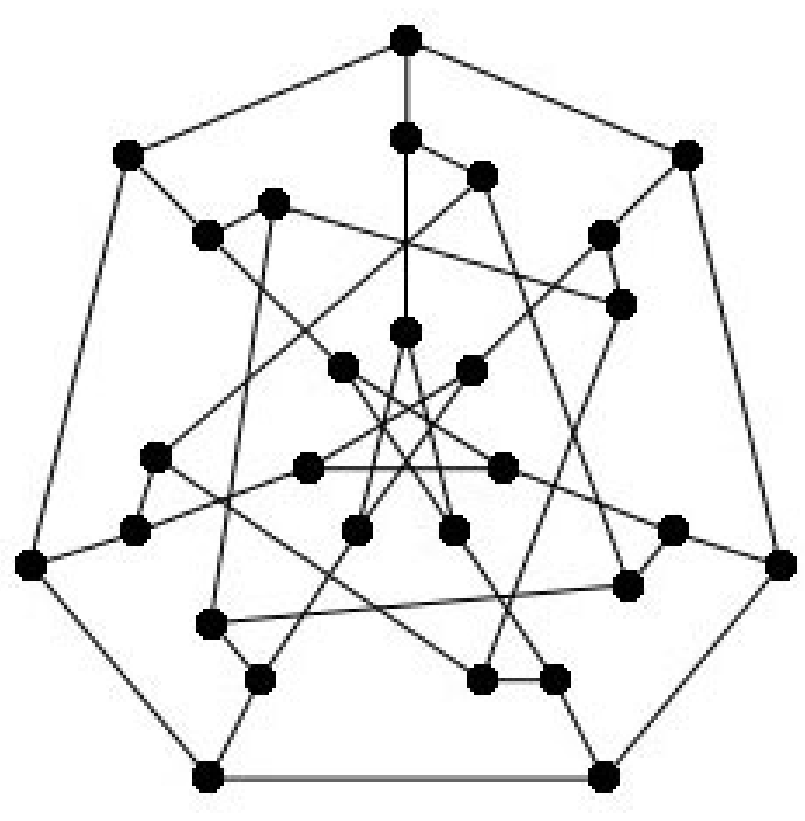}}\\
\end{figure}


\begin{thebibliography}{99}
\bibitem{genetictheory} P. Baniasadi, V. Ejov, J.A. Filar, and M. Haythorpe. Genetic Theory for Cubic Graphs. {\em Submitted}. Available at http://arxiv.org/abs/1209.5514
\bibitem{properties} P. Baniasadi and M. Haythorpe. Inherited Properties of Cubic Descendent Graphs. {\em In preparation}.
\bibitem{biggs} N. Biggs. Constructions for cubic graphs with large girth. {\em Elec. J. Combin.}, 5:A1 (1998).
\bibitem{coxeter} H.S.M. Coxeter. My Graph. {\em Proc. London Math. Soc.} 46:117--136 (1983).
\bibitem{erdos} P. Erd\H{o}s and H. Sachs. Regul\"{a}re Graphen gegebene Taillenweite mit mimimaler Knotenzahl. {\em Wiss. Z. Univ. Hall Martin Luther Univ. Halle--Wittenberg Math.--Natur. Reine}, 12:251--257 (1963).
\bibitem{conjecturepaper} J.A. Filar, M. Haythorpe and G.T. Nguyen. A conjecture on the prevalence of cubic bridge graphs. {\em Discuss. Mat. Graph Th.}, 30(1):175--179 (2010)
\bibitem{gareyjohnson} Garey, M.R. and Johnson, D.S.: Computers and intractiability: A Guide to the Theory of NP-Completeness. W. H. Freeman (1979)
\bibitem{harary} F. Harary. {\em Graph Theory}. Reading, MA: Addison-Wesley, p. 173, (1994)
\bibitem{flower} R. Isaacs. Infinite Families of Nontrivial Trivalent Graphs Which Are Not Tait Colorable. {\em Amer. Math. Monthly} 82:221--239, (1975)
\bibitem{mckay} B.D. McKay, W. Myrvold, and J. and Nadon. Fast Backtracking Principles Applies to Find New Cages. {\em 9th Ann. ACM-SIAM Symp. Discret. Algorithms January 1998}, pp. 188-191, (1998)
\bibitem{petersen} J. Petersen. Sur le th\'{e}or\`{e}me de Tait. {\em L'Interm\'{e}diare des Math\'{e}maticiens} 5:225--227, (1898)
\bibitem{robinson} R.W. Robinson and N.C. Wormald. Almost all regular graphs are hamiltonian. {\em Rand. Struct. Alg.} 5(2):363--374 (1994)
%\bibitem{royle} G. Royle. Cubic Cages. \hyperref{http://units.maths.uwa.edu.au/$\sim$gordon/remote/cages/} (2005)
\bibitem{royle} G. Royle. Cubic Cages. http://units.maths.uwa.edu.au/$\sim$gordon/remote/cages/ (2005)
\bibitem{sauer} N. Sauer. Extremaleigneschaften regul\"{a}rer Graphen gegebener Taillenweite. {\em \"{O}sterreich. Akad. Wiss. Math. Natur. KI. S--B}, 176:9--43 (1967)
\bibitem{tutte} W.T. Tutte. A Family of Cubical Graphs. {\em Proc. Cambridge Phil. Soc.}, 43:459--474 (1947)
\bibitem{tutte2} W.T. Tutte. {\em Connectivity in Graphs}. Toronto, Canada: Toronto University Press, pp. 70--83 (1967)
\end{thebibliography}
\end{document}